\newtheorem{thm}{Theorem}
\newtheorem{cor}[thm]{Corollary}
\newtheorem{prop}[thm]{Proposition}
\newtheorem{lem}[thm]{Lemma}
\theoremstyle{definition}
\setlist[enumerate]{itemsep=2ex, topsep=2ex} 
\setlist[itemize]{itemsep=2ex, topsep=2ex}
\newcommand{\sig}{\sigma}
\newcommand{\sub}{\subseteq}
\renewcommand{\c}[1]{\mathcal{#1}}
\newcommand{\mr}[1]{\mathrm{#1}}
\newcommand{\Eul}{\genfrac{\langle}{\rangle}{0pt}{}}
\newcommand{\Asc}{\mr{Asc}}
\newcommand{\asc}{\mr{asc}}
\title{Counting Labeled Threshold Graphs with Eulerian Numbers}
\author{Sam Spiro\footnote{Dept.\ of Mathematics, UCSD {\tt sspiro@ucsd.edu}. This material is based upon work supported by the National Science Foundation Graduate Research Fellowship under Grant No. DGE-1650112.}}
\date{\today}
\begin{document}
	\maketitle
\begin{abstract}
	A threshold graph is any graph which can be constructed from the empty graph by repeatedly adding a new vertex that is either adjacent to every vertex or to no vertices.  The Eulerian number $\genfrac{\langle}{\rangle}{0pt}{}{n}{k}$ counts the number of permutations of size $n$ with exactly $k$ ascents.  Implicitly Beissinger and Peled proved that the number of labeled threshold graphs on $n\ge 2$ vertices is
	\[\sum_{k=1}^{n-1}(n-k)\genfrac{\langle}{\rangle}{0pt}{}{n-1}{k-1}2^k.\]
	Their proof used generating functions.  We give a direct combinatorial proof of this result.
\end{abstract}
\section{Introduction}
This paper deals with threshold graphs, which can be defined recursively as follows.  The empty graph is the unique threshold graph on 0 vertices.  An $n$-vertex graph $G$ is a threshold graph if and only if it can be obtained by taking a threshold graph $G'$ on $n-1$ vertices and adding a new vertex which is either isolated or adjacent to every other vertex of $G'$.  

Threshold graphs were first studied by Chv\'{a}tal, and Hammer \cite{CH} in relation to linear programming, and since then they have been extensively studied.  One such reason for this is that threshold graphs can be characterized in several different ways.  For example, $G$ is a threshold graph if and only if it contains no induced subgraph isomorphic to $2K_2$, $P_4$, or $C_4$ \cite{MP}.  Variations such as random threshold graphs \cite{D} and oriented threshold graphs \cite{B} have been studied in recent years.  We refer the reader to the book ``Threshold Graphs and Related Topics'' \cite{MP} for more information and characterizations of threshold graphs.

It is easy to prove that the number of unlabeled threshold graphs on $n$ vertices is exactly $2^{n-1}$.  Let $t_n$ denote the number of labeled threshold graphs on $n$ vertices.   Beissinger and Peled found the exponential generating function of $t_n$ to be $e^x(1-x)/(2-e^x)$ \cite{BP}.  Using this they were able to derive an asymptotic formula for $t_n$, and implicitly they found an exact formula for $t_n$ in terms of the Eulerian numbers $\Eul{n}{k}$, which we shall now define.

Let $\c{S}_n$ denote the set of permutations of size $n$, where we treat our permutations as words written in one line notation.  Given $\pi \in \c{S}_n$, we say that position $i$ with $1\le i\le n-1$ is an ascent of $\pi$ if $\pi_i<\pi_{i+1}$.  Let $\Asc(\pi)$ denote the set of ascents of a permutation $\pi$ and let $\asc(\pi)=|\Asc(\pi)|$.  Define the Eulerian number $\Eul{n}{k}$ to be the number of permutations $\pi\in \c{S}_n$ with $\asc(\pi)=k$.  With this, a formula for $t_n$ can be stated as follows.

\begin{thm}[\cite{BP}]\label{T-Main}
	For $n\ge 2$, the number of labeled threshold graphs on $n$ vertices is
	\[\sum_{k=1}^{n-1}(n-k)\Eul{n-1}{k-1}2^k.\]
\end{thm}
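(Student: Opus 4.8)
The plan is to encode labeled threshold graphs by their creation sequences and then repackage this data as permutations carrying ascent statistics. First I would record, for a graph built in the creation order $v_1,\dots,v_n$, the type $s_j\in\{0,1\}$ of each added vertex ($1$ for dominating, $0$ for isolated), and observe the clean adjacency rule: for $i<j$ one has $v_i\sim v_j$ if and only if $s_j=1$. Consequently the graph depends only on the pair (creation order, type sequence), the first vertex $v_1$ is always a twin of $v_2$ (true twins if $s_2=1$, false twins if $s_2=0$), and more generally each maximal run of equal types forms a single twin class of $G$, while two vertices in distinct runs are never twins.

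The key structural step is to turn this into a bijection. I would show that labeled threshold graphs on $[n]$ correspond exactly to ordered set partitions $(B_1,\dots,B_m)$ of $[n]$ with $|B_1|\ge 2$, decorated by a bit $\tau_1\in\{0,1\}$ fixing the type of the first block (the later blocks then alternate). Surjectivity is clear by building the graph blockwise; injectivity is the heart of the matter: the blocks must be recovered as the twin classes of $G$, and the constraint $|B_1|\ge 2$ comes precisely from the fact that $v_1$ and $v_2$ are forced to be twins, which absorbs the lone ambiguity of the untyped first vertex. I would then convert ordered set partitions into permutations by listing each block in increasing order: this produces $\pi\in\Sn$ whose descents all lie at block boundaries, so a partition is the same as a pair $(\pi,T)$ with $\mr{Des}(\pi)\subseteq T\subseteq[n-1]$ recording the boundary set, and $|B_1|\ge 2$ becomes $1\notin T$, i.e.\ $\pi_1<\pi_2$. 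Counting the $2^{\asc(\pi)-1}$ choices of $T$ together with the two choices of $\tau_1$ yields the compact formula
\[t_n=\sum_{\substack{\pi\in\Sn\\ \pi_1<\pi_2}}2^{\asc(\pi)}.\]

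Finally I would match this with the Eulerian formula by reducing from $\Sn$ to $\c{S}_{n-1}$ through insertion of the largest letter $n$. Deleting $n$ from such a $\pi$ gives some $\sigma\in\c{S}_{n-1}$, and the Eulerian recurrence is governed exactly by where $n$ sits: the slots at which reinserting $n$ creates a new ascent are those lying at a descent of $\sigma$ or at its end, of which there are $\des(\sigma)+1=n-k$ when $\asc(\sigma)=k-1$. Summing $2^{\asc(\pi)}$ over the admissible insertions (those preserving $\pi_1<\pi_2$) and grouping by $\asc(\sigma)=k-1$ should collect the $\Eul{n-1}{k-1}$ permutations $\sigma$ with total weight $(n-k)2^{k}$, giving $\sum_{k=1}^{n-1}(n-k)\Eul{n-1}{k-1}2^k$ as desired.

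I expect the main obstacle to be the rigor of the structural bijection rather than the final permutation identity. The delicate points are (i) proving that the twin classes of a threshold graph coincide with the maximal monochromatic runs, so that the ordered set partition is genuinely recoverable from $G$ and no two decorated partitions collide, and (ii) pinning down the first-vertex freedom so that precisely the constraint $|B_1|\ge 2$ (equivalently $\pi_1<\pi_2$) appears, since an off-by-one here would corrupt the factor. The bookkeeping of how $\asc$ changes under insertion of $n$, while maintaining $\pi_1<\pi_2$, is the other place demanding care in order to recover the exact weight $(n-k)2^k$.
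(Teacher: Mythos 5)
Your structural half is sound, but it is essentially the paper's own argument in different packaging rather than a new route: your decorated ordered set partitions $(B_1,\dots,B_m;\tau_1)$ with $|B_1|\ge 2$ are exactly the paper's threshold pairs $(\pi,w)$ in standard form ($w_1=w_2$, and equal consecutive letters of $w$ forcing an ascent of $\pi$) -- the maximal constant runs of $w$ are your blocks, your twin-class recovery is the uniqueness statement of Lemma~\ref{L-Stand}, and your $(\pi,T,\tau_1)$ encoding is a complementation of the paper's Proposition~\ref{P-Bi}. Both routes arrive at the same intermediate identity $t_n=\sum_{\pi\in\c{S}_n^+}2^{\asc(\pi)}$, where $\c{S}_n^+$ denotes permutations with $\pi_1<\pi_2$.

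The final step, however, contains a genuine gap: your grouping is false fiber-by-fiber. Fix $\sigma\in\c{S}_{n-1}$ with $\asc(\sigma)=k-1$, so that the ascent-creating slots (the $\des(\sigma)$ descents of $\sigma$ plus the end) number $n-k$. If $\sigma$ begins with a descent, only the slot between $\sigma_1$ and $\sigma_2$ is admissible -- every other insertion leaves $\pi_1=\sigma_1>\sigma_2=\pi_2$ -- so $\sigma$ contributes $2^k$, not $(n-k)2^k$. If $\sigma$ begins with an ascent, then all $n-k$ ascent-creating slots are admissible, but so are the $k-1$ insertions at ascents of $\sigma$, each producing a $\pi\in\c{S}_n^+$ with $\asc(\pi)=k-1$ and weight $2^{k-1}$ that your grouping never accounts for. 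Already at $n=3$: $\sigma=12$ contributes $2^1+2^2=6$ (you claim $4$) and $\sigma=21$ contributes $2$ (you claim $4$); the totals agree only because the two errors cancel in aggregate. Establishing that cancellation is precisely the missing content, and it amounts to the refined count $\bigl|\{\pi\in\c{S}_m^+:\des(\pi)=d\}\bigr|=(d+1)\Eul{m-1}{d}$, which the paper proves as Lemma~\ref{L-Count} by induction: one classifies the fibers of the delete-$n$ map into the three types $\c{S}_{n-1,d}^+$, $\c{S}_{n-1,d-1}^+$, $\c{S}_{n-1,d}^-$ with multiplicities $d+1$, $n-d$, $1$ (your insertion analysis is exactly this engine), tracks the auxiliary count of descent-starting permutations, and invokes the recurrence $\Eul{n}{d}=(d+1)\Eul{n-1}{d}+(n-d)\Eul{n-1}{d-1}$ together with the symmetry $\Eul{m}{x}=\Eul{m}{m-1-x}$ (Corollary~\ref{C-Count}). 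To close your argument you must run this induction (or cite the known result) rather than sum weights $\sigma$-by-$\sigma$ as proposed.
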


This result can be derived from (16) of Beissinger and Peled~\cite{BP} through some algebraic manipulation, though it is not immediately obvious that this is the case.  Here we give a more direct and combinatorial proof of Theorem~\ref{T-Main}.

\section{Proof of Theorem~\ref{T-Main}}
We will say that a pair $(\pi,w)$ is a threshold pair (of size $n$) if $\pi\in \c{S}_n$ and if $w$ is a word in $\{+1,-1\}^{n}$.  Given a threshold pair $(\pi,w)$, let $T(\pi,w)$ denote the labeled threshold graph obtained as follows.  Let $G_1$ be the graph with a single vertex $\pi_1$.  Given $G_{i-1}$ with $2\le i\le n$, define $G_i$ by introducing a new vertex to $G_{i-1}$ labeled $\pi_i$ that is either connected to every vertex of $G_{i-1}$ if $w_i=+1$, and otherwise $\pi_i$ is an isolated vertex.  We then let $T(\pi,w)=G_n$.  As an example, Figure~\ref{F-G} shows $\tilde{G}:=T(24135,++---)$, where for ease of notation we have omitted the 1's in $w$.  We will use $\tilde{G}$ as a working example throughout this paper.

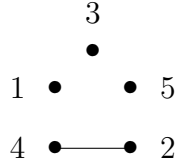
\begin{figure}
	\centering
	\begin{tikzpicture}[scale=1]
	\node at (0,.2) {$\bullet$};
	\node at (1,.2) {$\bullet$};
	\node at (0,1) {$\bullet$};
	\node at (1,1) {$\bullet$};
	\node at (.5,1.5) {$\bullet$};
	\node at (-.5,.2) {4};
	\node at (1.5,.2) {2};
	\node at (1.5,1) {5};
	\node at (-.5,1) {1};
	\node at (.5,2) {3};
	
	\draw (0,.2) -- (1,.2);
	\end{tikzpicture}
	\caption{$\tilde{G}=T(24135,++---)$} \label{F-G}
\end{figure}

There are several ways to write $\tilde{G}$, for example, $\tilde{G}=T(42351,-+---)$.  We wish to standardize our choice of a threshold pair.  To this end, we will say that a threshold pair $(\pi,w)$ of size $n\ge2$ is in standard form if $w_1=w_2$ and if $w_i=w_{i+1}$ implies $\pi_i<\pi_{i+1}$ for all $1\le i<n$.  For example, $(42351,-+---)$ is not in standard form but $(24135,++---)$ is. Our first goal will be to prove the following.

\begin{lem}\label{L-Stand}
	Let $G$ be a labeled threshold graph on $n\ge2$ vertices.  Then there exists a unique threshold pair $(\pi,w)$ in standard form such that $G=T(\pi,w)$.
\end{lem}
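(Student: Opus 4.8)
The whole argument hinges on one clean adjacency rule that I would establish first: in $G=T(\pi,w)$, for any indices $i<j$ the vertices $\pi_i$ and $\pi_j$ are adjacent if and only if $w_j=+1$. This follows immediately from the construction, since the edge $\pi_i\pi_j$ (if present) is created exactly when $\pi_j$ is introduced (at which point $\pi_i$ already exists, as $i<j$), and $\pi_j$ is joined to all earlier vertices precisely when $w_j=+1$; no later step alters this edge. Two consequences that I would record right away: the value $w_1$ is irrelevant to $G$ (position $1$ is never the later endpoint of a pair), and $\pi_n$ is a dominating vertex when $w_n=+1$ and an isolated vertex when $w_n=-1$.

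For \emph{existence}, I would start from an arbitrary threshold pair $(\pi,w)$ with $T(\pi,w)=G$ (one exists because $G$ is a threshold graph) and massage it into standard form using the adjacency rule. Since $w_1$ does not affect $G$, I may reset $w_1:=w_2$. Next I would show that exchanging the labels in two consecutive equal-sign positions leaves $G$ unchanged: if $w_i=w_{i+1}$, then by the adjacency rule swapping $\pi_i\leftrightarrow\pi_{i+1}$ preserves every adjacency, the only possibly affected edge being the one between these two vertices, which is present in either ordering exactly when $w_i=+1$. Repeatedly applying such swaps lets me sort each maximal block of constant sign into increasing label order, after which $(\pi,w)$ satisfies both standard-form conditions while still representing $G$.

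For \emph{uniqueness} I would induct on $n$, with base case $n=2$ checked by hand. The key point is that $G$ alone determines $w_n$ and $\pi_n$ for any standard-form pair. By the rule above, $G$ has a dominating vertex iff $w_n=+1$ and an isolated vertex iff $w_n=-1$; and a threshold graph on $n\ge2$ vertices cannot have both, since a dominating vertex $d$ and an isolated vertex $z$ would force the edge $dz$ to both exist and not exist. Thus $w_n$ is read off from $G$. I would then prove that, when $w_n=+1$, the set of dominating vertices of $G$ is \emph{exactly} the last maximal block $\{\pi_{j+1},\dots,\pi_n\}$: each such vertex is adjacent to all earlier vertices (its sign is $+1$) and to all later ones in the block, hence dominating; conversely a dominating $\pi_a$ with $a\le j$ fails to be adjacent to $\pi_j$ (whose sign is $-1$) unless $a=1$, and the case $a=1$ is excluded precisely by the standard-form requirement $w_1=w_2$. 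Since the last block is increasing, $\pi_n$ is the largest-labeled dominating vertex, which is intrinsic to $G$ (the isolated case $w_n=-1$ is symmetric). Having pinned down $(\pi_n,w_n)$, I delete $\pi_n$: the truncated pair $(\pi_1,\dots,\pi_{n-1})$, $(w_1,\dots,w_{n-1})$ is still in standard form and represents $G-\pi_n$, so the induction hypothesis forces it to be unique, and hence $(\pi,w)$ is unique.

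I expect the main obstacle to be the structural step identifying the entire last block with the set of dominating (respectively isolated) vertices, and in particular handling the boundary case that makes essential use of the $w_1=w_2$ convention; getting this exactly right is what guarantees that $\pi_n$ is forced and that the peeled-off pair remains in standard form so the induction closes.
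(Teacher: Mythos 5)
Your proof is correct, but the uniqueness half takes a genuinely different route from the paper's. The paper's engine is Lemma~\ref{L-Tech}, a technical necessary-and-sufficient criterion for two threshold pairs $(\pi,w)$ and $(\sig,u)$ to produce the same labeled graph: $w_k=u_k$ for $k\ge 2$, plus a condition confining each label to a constant-sign segment of $w$ (with a special clause for position $1$). Existence is then proved essentially as you propose --- reset $w_1:=w_2$ and sort each constant-sign block by adjacent swaps --- with each swap justified by that lemma, where you justify it directly from your adjacency rule ($\pi_i$ adjacent to $\pi_j$ for $i<j$ iff $w_j=+1$). For uniqueness, however, the paper argues globally rather than by induction: it partitions $w$ into maximal constant-sign segments $p_r\le i<p_{r+1}$ and uses condition (b) of Lemma~\ref{L-Tech} to show that the label sets $P_r$ and $S_r$ of corresponding segments of $\pi$ and $\sig$ coincide, after which the increasing-within-block requirement of standard form forces $\pi=\sig$ in one stroke. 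You instead peel off the last vertex by induction, identifying $w_n$ from whether $G$ has a dominating or isolated vertex and $\pi_n$ intrinsically as the largest-labeled such vertex. Your route is arguably cleaner: the one-line adjacency rule is much easier to verify than the two-pair comparison of Lemma~\ref{L-Tech}, and your identification of the last block with the dominating (resp.\ isolated) set isolates exactly where the convention $w_1=w_2$ earns its keep, a point that in the paper is buried inside the case analysis of condition (b). What the paper's approach buys is strictly more information --- a complete description of \emph{all} pairs representing a given $G$, standard or not --- and it avoids the mild bookkeeping your induction incurs (the truncated graph $G-\pi_n$ has vertex set $[n]\sm\{\pi_n\}$, so the inductive statement should be phrased for arbitrary label sets or composed with the order-preserving relabeling). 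Two small points to tighten: your opening assertion that some pair with $T(\pi,w)=G$ exists is precisely the paper's Lemma~\ref{L-Ex} and needs its own short induction, since the recursive definition of threshold graph only hands you a graph \emph{isomorphic} to $G$ and one must compose with the isomorphism to get equality of labeled graphs; and your parenthetical ``unless $a=1$'' is slightly misstated --- for $a=1<j$ the vertex $\pi_1$ still fails to be adjacent to $\pi_j$ since $w_j=-1$, and the genuine boundary case is $j=1$ (the last block starting at position $2$, making the non-adjacency condition vacuous for $a=j=1$), which, as you correctly conclude, is exactly the configuration that $w_1=w_2$ rules out.
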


To prove this, we require two more lemmas.

\begin{lem}\label{L-Tech}
	Let $(\pi,w)$ and $(\sig,u)$ be threshold pairs of size $n\ge 2$ and let $G_1:=T(\pi,w)$ and $G_2:=T(\sig,u)$.  Then $G_1=G_2$ as labeled graphs if and only if the following two conditions hold.
	\begin{itemize}
		\item[(a)] $w_k=u_k$ for all $k\ge 2$.
		\item[(b)] For every $1\le i\le n$, if $j=\pi_i^{-1}$ and $k=\sig_i^{-1}$, then either $1\in \{j,k\}$ and $w_\ell=w_{\max\{j,k\}}$ for all $1< \ell\le \max\{j,k\}$, or for every $\ell$ with $\min\{j,k\}\le \ell \le \max\{j,k\}$ we have $w_\ell=w_j=w_k$.
	\end{itemize}
\end{lem}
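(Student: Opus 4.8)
The plan is to reformulate adjacency in terms of positions and signs, to introduce the block decomposition of the sign sequence, and then to show that (a) and (b) say precisely that $(\pi,w)$ and $(\sig,u)$ have the same sign sequence on positions $\ge 2$ and assign every vertex to the same block. The starting observation is the adjacency rule: for distinct vertices $a,b$, writing $p_a=\pi^{-1}(a)$ and $p_b=\pi^{-1}(b)$, one has $a\sim b$ in $T(\pi,w)$ exactly when $w_{\max\{p_a,p_b\}}=+1$, since among the two vertices the later-added one is joined to the earlier one according to its own sign. In particular $w_1$ never affects $T(\pi,w)$, because $\max\{p_a,p_b\}\ge 2$ always. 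I would then group the positions $1,\ldots,n$ into blocks, namely the maximal runs of equal signs among $w_2,\ldots,w_n$, with position $1$ appended to the first run (its sign being irrelevant). The structural fact I would prove first is that $T(\pi,w)$ is determined by the ordered list of block signs and sizes together with the assignment of each vertex to a block, independently of the order of vertices within a block. This is immediate from the adjacency rule, since $w_{\max\{p_a,p_b\}}$ depends only on the sign of the later of the two blocks containing $a$ and $b$, and equals that common block's sign when $a$ and $b$ share a block.

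For the backward direction I would assume (a) and (b) and deduce $G_1=G_2$. Condition (a) says $w$ and $u$ agree on all positions $\ge 2$, so the two pairs have identical block structures. I would then read (b) vertex by vertex and check that it says exactly that each vertex lies in the same block under $\pi$ and under $\sig$: the second alternative of (b), namely $w_\ell=w_j=w_k$ throughout $[\min\{j,k\},\max\{j,k\}]$, is the assertion that positions $j$ and $k$ lie in a common run, while the first alternative of (b) is the same assertion adapted to the first block, where the free position $1$ may be matched to any position $k$ provided $w_2=\cdots=w_k$. Granting this translation, the structural fact above yields $G_1=G_2$ at once, since the later of the two blocks containing any pair $a,b$, and its sign, are the same for $\pi$ and for $\sig$.

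For the forward direction I would assume $G_1=G_2$ and recover (a) and (b) by a peeling argument showing the block decomposition is a graph invariant. The key point is that a threshold graph on $\ge 2$ vertices has a dominating vertex iff $w_n=+1$ and an isolated vertex iff $w_n=-1$ (and never both), and that the entire set of dominating (respectively isolated) vertices is precisely the last block; deleting this graph-determined set yields $T$ of the truncated pair, so one may iterate. This strips off the blocks from last to first, reading their signs (dominating versus isolated) and their sizes from $G$ alone, and simultaneously exhibits each block as a concrete vertex set. Applying the peeling to $G_1=G_2$ therefore forces the two sign sequences to agree on positions $\ge 2$, which is (a), and forces every vertex into the same block in both orderings, which is (b) by the translation of the previous paragraph.

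The one delicate point throughout is the bookkeeping at position $1$. Because its sign is free, position $1$ is absorbed into the first block, so that block contains one more vertex than its run of signs; recovering $w_2,\ldots,w_n$ from the peeled block sizes then requires subtracting one from the size of the first (last-peeled) block while taking the others verbatim. I expect this position-$1$ accounting to be the main obstacle: the step I would be most careful with is verifying that the first alternative of (b) encodes ``same first block'' in full generality, allowing a vertex at position $1$ to be matched to any position $k$ with $w_2=\cdots=w_k$, rather than merely recording that positions $1$ and $2$ are twins.
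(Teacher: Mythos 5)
Your proposal is correct, and it takes a genuinely different route from the paper. You first reduce everything to the adjacency rule $a\sim b$ iff $w_{\max\{p_a,p_b\}}=+1$ and then organize positions into maximal sign-runs (blocks, with position $1$ absorbed into the first run), showing that conditions (a) and (b) jointly say exactly ``same block signs and same vertex-to-block assignment''; your translation of (b) is accurate, including the delicate first alternative, which indeed encodes ``vertex at position $1$ shares the first block with position $k$'' precisely when $w_2=\cdots=w_k$. The paper instead argues locally: it proves necessity of (a) by induction on $n$ (deleting the last-added vertex, using that exactly one of $G_1,G_2$ would have an isolated vertex if $w_n\ne u_n$), proves necessity of (b) by comparing the degree of vertex $i$ in the two graphs through a case analysis on $j=1$ versus $j>1$ and on the signs $w_j,w_k$, and proves sufficiency by writing out the neighborhood of each vertex in both graphs and checking equality. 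Your forward direction replaces the degree computations with a block-peeling argument --- the set of dominating (resp.\ isolated) vertices is exactly the last block, deleting it yields $T$ of the truncated pair, so the ordered block signs, sizes, and vertex sets are graph invariants --- which recovers (a) and (b) simultaneously rather than separately; your backward direction follows at once from the structural fact that adjacency depends only on block indices and signs. What your approach buys is conceptual transparency (the ``iff'' becomes ``same block data'') and it essentially anticipates the segment decomposition $p_0<p_1<\cdots$ that the paper only introduces later in the uniqueness half of Lemma~\ref{L-Stand}, so it would streamline that proof too; what the paper's approach buys is elementarity, needing nothing beyond degree and neighborhood comparisons and no peeling machinery or first-block size bookkeeping. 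The one point where your sketch would need the most care in a full write-up is the assertion that the dominating (or isolated) set is \emph{precisely} the last block --- one must check both inclusions from the adjacency rule, including the one-run case where the whole graph is a single block --- but this verification is routine and the position-$1$ accounting you flagged does work out.
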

\begin{proof}
	We first show that these conditions are necessary. We claim that condition (a) is necessary to have $G_1$ isomorphic to $G_2$, which certainly implies that (a) is necessary for $G_1$ and $G_2$ to be equal as labeled graphs.  This claim is true when $n=2$.  Assume the claim has been proven up to some $n\ge 3$.  If $w_n\ne u_n$, then exactly one of $G_1$ and $G_2$ will have an isolated vertex, so they cannot be isomorphic.  Otherwise let $G'_1$ be $G_1$ after deleting vertex $\pi_n$ and $G_2'$ be $G_2$ after deleting $\sig_n$.  Note that in both cases we either delete an isolated vertex or a vertex adjacent to every other vertex since $w_n=u_n$.  Thus $G_1\cong G_2$ if and only if $G_1'\cong G_2'$.  The result follows by applying the inductive hypothesis to $G_1'$ and $G_2'$ since the words generating these graphs are the words $w$ and $u$ after deleting their last letters.  Thus (a) is necessary.
	
	We  next show that (b) is necessary.  Assume for contradiction that $G_1=G_2$ and that (b) does not hold for some $i$.  By the above claim, we can assume that (a) holds.  Let $j=\pi_i^{-1}$ and $k=\sig_i^{-1}$.  If $j=k$ then (b) holds, a contradiction.  Thus we can assume that $j\ne k$, and without loss of generality we can assume $j<k$.  Let $d_r$ be the degree of vertex $i$ in $G_r$ for $r=1,2$.  First consider the case $j=1$ and $w_k=+1$.  In this case $d_1=|\{\ell:w_\ell=+1,\ \ell>1\}|$ and $d_2=k-1+|\{\ell:w_\ell=+1,\ \ell>k\}$, where we used that $u_\ell=w_\ell$ for all $\ell>1$ by (a).  Thus $d_1<d_2$ unless $w_\ell=+1$ for all $1<\ell\le k$. Because $G_1=G_2$, this must be the case, so (b) holds for $i$, a contradiction.    Essentially the same proof works if $j=1$ and $w_k=-1$.
	
	Now assume $j>1$ and $w_j=+1$, so $d_1=j-1+|\{\ell:w_\ell=+1,\ \ell>j\}|$.  If $w_k=-1$ then $d_2=|\{\ell:w_\ell=+1,\ \ell>k\}|<d_1$ since $j-1\ge 1$ by assumption.  In this case we cannot have $G_1=G_2$, so we can assume $w_k=+1$.  This implies $d_2=k-1+|\{\ell:w_\ell=+1,\ \ell>k\}|$. This will be strictly larger than $d_1$ unless $w_\ell=+1$ for all $j<\ell<k$.  Thus (b) holds for $i$, a contradiction.  Essentially the same proof works if $j>1$ and $w_j=+1$.  We conclude that (b) is necessary.
	
	To show that these conditions are sufficient, let $(\pi,w)$ and $(\sig,u)$ be threshold pairs satisfying (a) and (b).  Fix some $i$ and let $j=\pi_i^{-1}$ and $k=\sig_i^{-1}$.  We can assume without loss of generality that $j\le k$.  First consider the case $j=1$ and $w_k=+1$.  Then the neighborhood of $i$ in $G_1$ is $\{\pi_{\ell}^{-1}: \ell>j,\ w_{\ell}=+1\}$, and the neighborhood of $i$ in $G_2$ is $\{\pi_{\ell}^{-1}:\ell<k\}\cup \{\pi_{\ell}^{-1}: \ell>k,\ w_{\ell}=+1\}$, where again we used that $u_{\ell}=w_{\ell}$ for all $\ell>1$.  By (b), $w_{\ell}=+1$ for all $1<\ell<k$, so these two sets are equal.  The same result holds if $j=1$ and $w_k=-1$.
	
	Assume $j>1$ and $w_j=+1$.  We have $u_k=w_k=w_j=+1$ by (a) and (b).  Thus the neighborhood of $i$ in $G_1$ is $\{\pi_{\ell}^{-1}:\ell<j\}\cup \{\pi_{\ell}^{-1}: \ell>j,\ w_{\ell}=+1\}$, and the neighborhood of $i$ in $G_2$ is $\{\pi_{\ell}^{-1}:\ell<k\}\cup \{\pi_{\ell}^{-1}: \ell>k,\ w_{\ell}=+1\}$.  By (b) we have $w_\ell=+1$ for all $j<\ell<k$, so these sets are equal. The same result holds if $j>1$ and $w_j=-1$.  We conclude that the neighborhoods of every vertex is the same in both $G_1$ and $G_2$, and hence $G_1=G_2$.
\end{proof}

\begin{lem}\label{L-Ex}
	If $G$ is a threshold graph on $n\ge 2$ vertices, then there exists a threshold pair $(\pi,w)$ such that $G=T(\pi,w)$.
\end{lem}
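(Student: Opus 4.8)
The plan is to induct on $n$, mirroring the recursive definition of threshold graphs. The content of the lemma is essentially a restatement of that definition: a threshold graph is exactly one that can be built by adding isolated-or-dominating vertices one at a time, and the operator $T$ simply records such a build sequence as a pair $(\pi,w)$, where $\pi$ lists the order in which the vertices are added and $w$ records whether each newly added vertex is dominating ($+1$) or isolated ($-1$). So the real work is bookkeeping: converting an abstract recursive construction of $G$ into an honest element of $\c{S}_n\times\{+1,-1\}^n$. For the base case $n=2$, the graph $G$ on vertex set $\{1,2\}$ either contains the edge $\{1,2\}$ or not; in the first case $T(12,(+1,+1))=G$ and in the second $T(12,(+1,-1))=G$, so the claim holds, where we note that $w_1$ is irrelevant since the first vertex has nothing to attach to.

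For the inductive step, let $n\ge 3$ and assume the result for $n-1$. Since $G$ is a threshold graph, the recursive definition furnishes a threshold graph $G'$ on $n-1$ vertices and a vertex $v$ such that $G$ arises from $G'$ by adding $v$ either isolated or adjacent to all of $V(G')$; thus $G'=G-v$ and $v$ is isolated or dominating in $G$. The one subtlety is that $G-v$ has vertex set $\{1,\ldots,n\}\setminus\{v\}$ rather than $\{1,\ldots,n-1\}$, so the inductive hypothesis does not apply verbatim. I would resolve this by relabeling: let $\phi$ be the unique increasing bijection $\{1,\ldots,n\}\setminus\{v\}\to\{1,\ldots,n-1\}$. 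Since the threshold property is preserved under relabeling, $\phi(G-v)$ is a threshold graph on $\{1,\ldots,n-1\}$, and the inductive hypothesis yields $(\sig,u)\in\c{S}_{n-1}\times\{+1,-1\}^{n-1}$ with $T(\sig,u)=\phi(G-v)$. Because the construction $T$ treats labels as formal names, applying the same signs $u$ to the transported order $\phi^{-1}(\sig_1),\ldots,\phi^{-1}(\sig_{n-1})$ produces the graph $\phi^{-1}(\phi(G-v))=G-v$ on $\{1,\ldots,n\}\setminus\{v\}$.

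Finally I would append $v$ as the last vertex. Set $\pi=(\phi^{-1}(\sig_1),\ldots,\phi^{-1}(\sig_{n-1}),v)\in\c{S}_n$, and define $w\in\{+1,-1\}^n$ by $w_i=u_i$ for $1\le i\le n-1$, with $w_n=+1$ if $v$ is dominating in $G$ and $w_n=-1$ if $v$ is isolated. Running the construction, the intermediate graph $G_{n-1}$ built by the first $n-1$ steps is exactly $G-v$ by the previous paragraph, and the last step attaches $v$ to every other vertex (when $w_n=+1$) or as an isolated vertex (when $w_n=-1$), matching precisely how $v$ sits inside $G$. Hence $T(\pi,w)=G$, completing the induction. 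I do not expect any genuinely hard step: the argument is a direct unwinding of the recursive definition, and the only point requiring care is the relabeling that reconciles the vertex set of $G-v$ with the domain $\{1,\ldots,n-1\}$ of the inductive hypothesis, together with the observation that $T$ is compatible with relabeling.
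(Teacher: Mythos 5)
Your proposal is correct and takes essentially the same approach as the paper: induction on $n$ driven by the recursive definition, appending the new isolated-or-dominating vertex as $\pi_n$, and using the fact that the construction $T$ is compatible with relabeling. The only cosmetic difference is where the relabeling happens --- the paper builds an isomorphic copy $K$ with the new vertex labeled $n$ and transports at the end via $T(\sigma\circ\pi,w)=G$, whereas you peel the vertex $v$ off $G$ first and relabel $G-v$ onto $\{1,\ldots,n-1\}$ before invoking the inductive hypothesis.
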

\begin{proof}
	This certainly holds when $n=2$, so assume it holds up to some $n\ge 3$.  Because $G$ is a threshold graph, there exists a labeled threshold graph $H$ on $n-1$ vertices such that $G$ is isomorphic to $H$ together with the additional vertex $n$ which is either isolated or adjacent to every other vertex of $G'$.  Denote this labeled graph that $G$ is isomorphic to by $K$.  
	
	By our inductive hypothesis, $H=T(\pi',w')$ for some threshold pair $(\pi',w')$.  Define $\pi$ by $\pi_k=\pi'_k$ for $k<n$ and $\pi_n=n$.  Define $w$ by  $w_k=w'_k$ for $k<n$ with $w_n=-1$ if $K$ contains an isolated vertex and $w_n=+1$ otherwise.  Then $K=T(\pi,w)$.  By construction there exists a graph isomorphism $\sig:V(K)\to V(G)$.  Thus $T(\sig\circ \pi,w)$ is isomorphic to $G$ with the identity map serving as the graph isomorphism.  In other words, $G=T(\sig\circ \pi,w)$.
\end{proof}

\begin{proof}[Proof of Lemma~\ref{L-Stand}]
	We first show that such a pair exists. Let $(\pi',w')$ be a threshold pair with $G=T(\pi',w')$, which exists by Lemma~\ref{L-Ex}.  Define $w$ by $w_k=w_k'$ for $k>1$ and $w_1=w_2$.  Note that $T(\pi',w)=G$ by Lemma~\ref{L-Tech}.  Next define $\pi$ by repeatedly flipping adjacent letters of $\pi'$ that are out of order.  More precisely, let $\pi^{(0)}=\pi'$.  Inductively assume we have defined $\pi^{(j)}$.  If $(\pi^{(j)},w)$ is in standard form, take $\pi=\pi^{(j)}$.  Otherwise there exists some index $i$ such that $\pi^{(j)}_i>\pi^{(j)}_{i+1}$ and $w_i=w_{i+1}$.  Define $\pi^{(j+1)}$ by $\pi^{(j+1)}_{i}=\pi^{(j)}_{i+1},\ \pi^{(j+1)}_{i+1}=\pi^{(j)}_i$, and with $\pi^{(j+1)}_k=\pi^{(j)}_k$ for all other $k$.  Note that this process eventually terminates (this can be seen, for example, by noting that the number of inversions decreases at each step), and that $T(\pi^{(j+1)},w)=T(\pi^{(j)},w)$ for all $j$ by Lemma~\ref{L-Tech}.  As $T(\pi^{(0)},w)=T(\pi',w)=G$, we conclude that $T(\pi,w)=G$, and hence such a pair exists.
	
	To show that this pair is unique, assume that $(\sig,u)$ is also a threshold pair in standard form with $G=T(\sig,u)$.  By Lemma~\ref{L-Tech} we must have $u_k=w_k$ for all $k>1$.  Further, $u_1=u_2=w_2=w_1$ since the pairs are in standard form.  We next partition $w$ into maximal segments that are all $\pm 1$.  To this end, let $p_0=1$.  Inductively given $p_{r-1}$, define $p_{r}$ to be the smallest integer $p$ such that $w_{p}\ne w_{p_{r-1}}$, and let $p_r=n+1$ if no such integer exists.  Define $P_r=\{\pi_i:p_r\le i<p_{r+1}\}$ and $S_r=\{\sig_i:p_r\le i<p_{r+1}\}$.  
	
	We claim that $P_r=S_r$ for all $r$.  Indeed, assume that there exists some $i\in P_r$ and $i\in S_{r'}$ with, say, $r<r'$.  Let $j=\pi_i^{-1}$ and $k=\sig_i^{-1}$.  By Lemma~\ref{L-Tech} we have $w_\ell=w_j$ for all $j\le \ell \le k$.  In particular this holds for $\ell=p_{r+1}$ since $j<p_{r+1}\le k$, which is a contradiction since $w_{p_r}=w_j$ by assumption of $\pi_j\in P_r$.  We conclude that $P_r=S_r$ for all $r$.  Because $(\pi,w)$ is in standard form, we also must have $\pi_{p_r}<\pi_{p_r+1}<\cdots<\pi_{p_{r+1}-1}$ for all $r$, and the same inequalities hold with $\pi$ replaced by $\sig$.  We conclude that $\pi_i=\sig_i$ for all $p_r\le i<p_{r+1}$ for all $r$, and hence $\pi=\sig$, proving the result.
\end{proof}

We now define our sets for the desired bijection.  Let $T_n$ denote the set of labeled threshold graphs on $n$ vertices.  Let $\c{S}_n^+$ for $n\ge 2$ be the set of permutations of length $n$ with $\pi_1<\pi_2$.  That is, these are the set of permutations which begin with an ascent.  Define $P_n:=\{(\pi,A):\pi \in \c{S}_n^+,\ A\sub \Asc(\pi)\}$.

\begin{prop}\label{P-Bi}
	There exists a bijection from $T_n$ to $P_n$.
\end{prop}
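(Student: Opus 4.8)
The plan is to use Lemma~\ref{L-Stand} to replace labeled threshold graphs by their standard-form encodings, and then to read off an element of $P_n$ directly from the standard form. By Lemma~\ref{L-Stand}, the map $(\pi,w)\mapsto T(\pi,w)$ restricts to a bijection from the set of standard-form threshold pairs of size $n$ onto $T_n$, so it suffices to construct a bijection between standard-form pairs and $P_n$. The first observation is that if $(\pi,w)$ is in standard form then $w_1=w_2$, so the standard condition forces $\pi_1<\pi_2$ and hence $\pi\in\c{S}_n^+$. Moreover, the standard condition says precisely that every position $i$ with $\pi_i>\pi_{i+1}$ must satisfy $w_i\ne w_{i+1}$; that is, every descent of $\pi$ occurs at a sign change of $w$, while at an ascent $w$ may or may not change sign (and at $i=1$ it never does).

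Given a standard-form pair $(\pi,w)$, I would define $A\sub\{1,\dots,n-1\}$ by declaring that $1\in A$ if and only if $w_1=+1$, and that for $2\le i\le n-1$ we have $i\in A$ if and only if $i\in\Asc(\pi)$ and $w_i=w_{i+1}$. Since $\pi\in\c{S}_n^+$ we have $1\in\Asc(\pi)$, and by construction every other element of $A$ is an ascent, so $A\sub\Asc(\pi)$ and $(\pi,A)\in P_n$. Intuitively, $A$ records the value of the initial sign $w_1$ together with the set of non-initial ascents at which $w$ does \emph{not} change sign.

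For the inverse I would start from $(\pi,A)\in P_n$ and build $w$ as follows: set $w_1=+1$ if $1\in A$ and $w_1=-1$ otherwise, and then for $1\le i\le n-1$ set $w_{i+1}=w_i$ when $i=1$ or $i\in A$, and $w_{i+1}=-w_i$ otherwise. Using $A\sub\Asc(\pi)$, one checks that the sign changes of $w$ occur exactly at the descents of $\pi$ together with the non-initial ascents lying outside $A$; since no sign change occurs at $i=1$ we get $w_1=w_2$, and whenever $w_i=w_{i+1}$ the position $i$ is an ascent, so $\pi_i<\pi_{i+1}$. Thus $(\pi,w)$ is in standard form. It then remains to verify that the two maps are mutually inverse, which is a direct induction on the coordinates of $w$: the forward map applied to the reconstructed $w$ returns the sign of $w_1$ (hence the status of $1\in A$) and flags exactly the non-initial ascents with no sign change (hence $A$ on $\{2,\dots,n-1\}$), and conversely the reconstructed word agrees coordinatewise with the original. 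Composing with the bijection of Lemma~\ref{L-Stand} then yields the desired bijection $T_n\to P_n$.

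I expect the only real subtlety to be the asymmetric role of position $1$: in a standard-form pair the first position is forced to be an ascent and is never a sign change, yet the global sign $w_1$ is still free, and this free binary choice must be absorbed somewhere. Encoding it as membership of $1$ in $A$---rather than as the sign-change information that $A$ carries at every other position---is exactly what makes $A\sub\Asc(\pi)$ hold while keeping the map invertible, so keeping this distinction straight is the main bookkeeping hurdle. Everything else reduces to the already-established fact (via Lemmas~\ref{L-Tech} and~\ref{L-Stand}) that standard form faithfully records the graph.
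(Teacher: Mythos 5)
Your proposal is correct and takes essentially the same approach as the paper: both pass through the unique standard-form pair of Lemma~\ref{L-Stand} and encode $w$ as a subset $A\sub\Asc(\pi)$, with membership of $1$ in $A$ recording the sign $w_1$ and the other elements of $A$ marking the non-initial positions where $w$ does not change sign, with the same inverse construction of $w$ from $(\pi,A)$. The only cosmetic difference is that you factor the map as a composition of two bijections while the paper defines $\phi:T_n\to P_n$ directly and verifies it is well defined and invertible.
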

\begin{proof}	
	Let $G$ be a labeled threshold graph and $(\pi^G,w^G)$ the unique threshold pair guaranteed by Lemma~\ref{L-Stand}.  Define $A'_G=\{i:w_i^{G}=w_{i+1}^G,\ 2\le i\le n-1\}$.  Let $A_G=A_G'\cup \{1\}$ if $w_1=+1$ and let $A_G=A_G'$ if $w_1=-1$.  Define $\phi(G)=(\pi^G,A_G)$.  For example, if $\tilde{G}$ is as in Figure~\ref{F-G}, we have $(\pi^{\tilde{G}},w^{\tilde{G}})=(24135,++---)$, and hence $\phi(\tilde{G})=(24135,\{1,3,4\})$.  We claim that the map $\phi$ gives the desired bijection.
	
	We first show that $\phi$ is a map from $T_n$ to $P_n$.  Indeed, because $(\pi^G,w^G)$ is in standard form, we have $w_1^G=w_2^G$ and hence $\pi_1^G<\pi_2^G$, so $\pi^G\in \c{S}_n^+$.  By similar reasoning we find that $A_G\sub \Asc(\pi^G)$, proving the claim.
	
	Let $(\pi,A)$ be an element of $P_n$.  We define the word $w$ as follows.  Let $w_1=w_2=+1$ if $1\in A$ and set $w_1=w_2=-1$ otherwise.  Given $w_{k}$, let $w_{k+1}=w_{k}$ if $k\in A$ and otherwise let $w_{k+1}=-w_{k}$.   We claim that $G=T(\pi,w)$ is the unique threshold graph with $\phi(G)=(\pi,A)$.  
	
	First observe that $A\sub \Asc(\pi)$ implies the pair $(\pi,w)$ is in standard form. Thus $\phi(G)=(\pi,A_G)$, and it is not difficult to verify that $A_G=A$ by construction, so $\phi(G)=(\pi,A)$.  Assume that $H$ is also such that $\phi(H)=(\pi,A)$, so in particular $\pi^H=\pi$.  We claim that $w_k^H=w_k$ for all $k$.  Indeed, because $A_H=A$, we must have $w_1^H=w_1$, as this completely determines whether $1$ is in $A_H$ or not, and also $w_2^H=w_1^H=w_1=w_2$ since both pairs are in standard form.  Inductively assume that $w_{k}^H=w_k$ for some $2\le k\le n-1$.  If $k\in A$, then we must have $w_{k+1}^H=w_k^H=w_k=w_{k+1}$, and otherwise we have $w_{k+1}^H=-w_{k}^H=-w_{k}=w_{k+1}$.  We conclude the result by induction.  Thus $(\pi^H,w^H)=(\pi,w)=(\pi^G,w^G)$, and we conclude that $H=G$ by Lemma~\ref{L-Stand}.  Thus each element of $P_n$ is mapped to by a unique element of $T_n$ and the result follows.
\end{proof}

All that remains is to enumerate $P_n$.  To this end, we say that a permutation $\pi$ has a descent in position $i$ if $\pi_i>\pi_{i+1}$.   

\begin{lem}\label{L-Count}
	For all $n$ and $d$ with $n\ge 1$ and $0\le d\le n-1$, let $\c{S}_{n,d}^+$ be the set of permutations of size $n$ which begin with an ascent and which have exactly $d$ descents.  If $P(n,d):=|\c{S}_{n,d}^+|$, then  \[P(n,d)=(d+1)\Eul{n-1}{d}.\]
\end{lem}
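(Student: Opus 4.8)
The plan is to reduce the count to a weighted sum over smaller permutations by deleting the first letter, and then to evaluate that sum by an insertion recurrence. Throughout I use the symmetry $\Eul{m}{d}=\Eul{m}{m-1-d}$ (immediate from reversing a permutation, which swaps ascents and descents), so that $\Eul{n-1}{d}$ equals the number of permutations of size $n-1$ with exactly $d$ \emph{descents}; this is the form I will match.

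First I would introduce the map $\Phi\colon\c{S}_{n,d}^+\to\{\sigma\in\c{S}_{n-1}:\des(\sigma)=d\}$ sending $\pi$ to the standardization of the word $\pi_2\cdots\pi_n$. Since $\pi$ begins with an ascent, all $d$ of its descents occur in positions $2,\dots,n-1$, and standardizing preserves relative order and hence exactly these descents; so $\Phi(\pi)$ has $d$ descents and $\Phi$ is well defined. To find the fiber over a fixed $\sigma$, observe that $\pi$ is recovered by choosing its first value $v=\pi_1\in\{1,\dots,n\}$ and arranging the remaining values in the relative order prescribed by $\sigma$. The descents of $\pi$ in positions $2,\dots,n-1$ are then automatically $d$, so $\pi\in\c{S}_{n,d}^+$ holds exactly when no descent is created at position $1$, i.e.\ when $\pi_1<\pi_2$. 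As $\pi_2$ is the $\sigma_1$-th smallest element of $\{1,\dots,n\}\sm\{v\}$, a short check shows $v<\pi_2$ holds precisely when $v\le\sigma_1$. Hence the fiber has exactly $\sigma_1$ elements, giving
\[P(n,d)=\sum_{\sigma\in\c{S}_{n-1},\ \des(\sigma)=d}\sigma_1.\]

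The crux, and the main obstacle, is to show this sum equals $(d+1)\Eul{n-1}{d}$; equivalently, that the average first entry among permutations of size $m$ with $d$ descents is $d+1$. I would prove the identity $F(m,d):=\sum_{\des(\sigma)=d}\sigma_1=(d+1)\Eul{m}{d}$ by induction on $m$, using insertion of the largest letter $m$ into a permutation $\rho\in\c{S}_{m-1}$. Among the $m$ insertion slots, inserting at the end or into one of the $\des(\rho)$ descent gaps preserves the descent count and keeps the first entry equal to $\rho_1$, whereas inserting at the front (new first entry $m$, one new descent) or into an ascent gap (first entry $\rho_1$, one new descent) raises the descent count by one. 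Tracking first entries yields
\[F(m,d)=(d+1)F(m-1,d)+(m-1-d)F(m-1,d-1)+m\,\Eul{m-1}{d-1}.\]

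Substituting the inductive hypotheses $F(m-1,d)=(d+1)\Eul{m-1}{d}$ and $F(m-1,d-1)=d\,\Eul{m-1}{d-1}$, the coefficient of $\Eul{m-1}{d-1}$ becomes $(m-1-d)d+m=(d+1)(m-d)$, so the right-hand side factors as $(d+1)\big[(d+1)\Eul{m-1}{d}+(m-d)\Eul{m-1}{d-1}\big]$, which is exactly $(d+1)\Eul{m}{d}$ by the classical Eulerian recurrence $\Eul{m}{d}=(d+1)\Eul{m-1}{d}+(m-d)\Eul{m-1}{d-1}$ (itself obtained from the same insertion, now merely counting permutations rather than summing first entries). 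Taking $m=n-1$ then gives $P(n,d)=(d+1)\Eul{n-1}{d}$, with the degenerate case $n=1$ checked by hand. I expect the algebraic collapse of the coefficient to $(d+1)(m-d)$ to be the one spot that must be verified carefully, since it is precisely what lets the Eulerian recurrence close the induction.
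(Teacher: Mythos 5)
Your proof is correct, but it takes a genuinely different route from the paper's. The paper also inducts via insertion of the largest letter, but it does so directly on $\c{S}_{n,d}^+$: deleting the letter $n$ from $\pi\in\c{S}_{n,d}^+$ lands in one of three classes ($\c{S}_{n-1,d}^+$, $\c{S}_{n-1,d-1}^+$, or the auxiliary class $\c{S}_{n-1,d}^-$ of descent-starting permutations), giving $P(n,d)=(d+1)P(n-1,d)+(n-d)P(n-1,d-1)+M(n-1,d)$, and then eliminates the auxiliary count via $P(n,d)+M(n,d)=\Eul{n}{d}$ before closing the induction with the Eulerian recurrence. You instead delete the \emph{first} letter and count fibers exactly (your check that the fiber over $\sigma$ has size $\sigma_1$ is right: $\pi_2$ is the $\sigma_1$-th smallest element of $\{1,\dots,n\}\sm\{v\}$, so $v<\pi_2$ iff $v\le\sigma_1$), reducing the lemma to the clean standalone identity that the sum of first entries over size-$m$ permutations with $d$ descents is $(d+1)\Eul{m}{d}$ — i.e., the average first entry is $d+1$ — which you then prove by the same largest-letter insertion, with the extra front-insertion term $m\,\Eul{m-1}{d-1}$ tracking the new first entry; the coefficient collapse $(m-1-d)d+m=(d+1)(m-d)$ checks out, and the recurrence then closes via \eqref{E-Eulerian} exactly as in the paper. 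What your route buys is the avoidance of the auxiliary class $\c{S}_{n,d}^-$ and an exact bijective reduction up front, plus an independently interesting weighted-Eulerian identity; what the paper's route buys is a shorter path that stays entirely within unweighted counts and matches the proof in \cite{S}. One small shared wrinkle: for $n=1$ the set $\c{S}_{1,0}^+$ only has size $1$ under the convention that the unique one-letter permutation vacuously begins with an ascent — you flag this degenerate case just as the paper does, so no gap there.
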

We note that this result is proven in \cite{S}, but for completeness we include the full proof here.  For this proof, we recall the following recurrence for the Eulerian numbers,  which is valid for all $n\ge 1$ and $d\ge 0$ after adopting the convention $\Eul{0}{d}=0$ for $d>0$, $\Eul{0}{0}=1$, and $\Eul{n}{-1}=0$ \cite{Con}:
\begin{align}\label{E-Eulerian}
\Eul{n}{d}=(d+1)\Eul{n-1}{d}+(n-d)\Eul{n-1}{d-1}.
\end{align}

\begin{proof}
	The result is true for $d=0$, so assume $d\ge 1$.  For any fixed $d$ the result is true for $n=1$, so assume $n\ge 2$. 
	To help us prove the result, we define $\c{S}_{n,d}^-$ to be the set of permutations which begin with a descent and which have exactly $d$ descents.  Define $M(n,d):=|\c{S}_{n,d}^-|$.  By construction we have
	\begin{equation}\label{E-MP}
	P(n,d)+M(n,d)=\Eul{n}{d}.
	\end{equation}
	
	Define the map $\phi:\c{S}_{n,d}^+\to \c{S}_{n-1}$ by sending $\pi \in \c{S}_{n,d}^+$ to the word obtained by removing the letter $n$ from $\pi$.  We wish to determine the image of $\phi$.  Let $\pi$ be a permutation in $\c{S}_{n,d}^+$, and let $i$ denote the position of $n$ in $\pi$.  Note that $i\ne 1$ since $\pi$ begins with an ascent.  If $i=n$ or $\pi_{i-1}>\pi_{i+1}$ with $i>2$, then $\phi(\pi)$ will continue to have $d$ descents and begin with an ascent, so $\phi(\pi)\in \c{S}_{n-1,d}^+$.  If $i=2$ and $\pi_1>\pi_3$, then  $\phi(\pi)\in \c{S}_{n-1,d}^-$.  If $\pi_{i-1}<\pi_{i+1}$, then $\phi(\pi)\in \c{S}_{n-1,d-1}^+$.  
	
	It remains to show how many times each element of the image is mapped to by $\phi$.  If $\pi\in \c{S}_{n-1,d}^+$, then $n$ can be inserted into $\pi$ in $d+1$ ways to obtain an element of $\c{S}_{n,d}^+$ (it can be placed at the end of $\pi$ or in between any $\pi_{i}>\pi_{i+1}$).  If $\pi \in \c{S}_{n-1,d-1}^+$, then $n$ can be inserted in $\pi$ in $n-d$ ways to obtain an element of $\c{S}_{n,d}^+$ (it can be placed in between any $\pi_i<\pi_{i+1}$).  If $\pi\in \c{S}_{n-1,d}^-$, then $n$ must be inserted in between $\pi_1>\pi_2$ in order to have the word begin with an ascent.  With this and the inductive hypothesis, we conclude that
	\begin{align}
	P(n,d)&=(d+1)P(n-1,d)+(n-d)P(n-1,d-1)+M(n-1,d)\nonumber\\ 
	&=(d+1)^2\Eul{n-2}{d}+(n-d)d\Eul{n-2}{d-1}+M(n-1,d).\label{E-AEul}
	\end{align}
	By using \eqref{E-MP}, the inductive hypothesis, and \eqref{E-Eulerian}; we find \begin{align*}M(n-1,d)&=\Eul{n-1}{d}-P(n-1,d)\\ &=\Eul{n-1}{d}-(d+1)\Eul{n-2}{d}\ \\ &=(n-d)\Eul{n-2}{d-1}.\end{align*}
	Substituting this into \eqref{E-AEul} and applying \eqref{E-Eulerian} again gives the result.
\end{proof}

\begin{cor}\label{C-Count}
	Let $n\ge 2$ and $1\le k\le n-1$.  The number of permutations of $\c{S}_n^+$ with exactly $k$ ascents is $(n-k)\Eul{n-1}{k-1}$.
\end{cor}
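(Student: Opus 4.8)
The plan is to reduce the corollary directly to Lemma~\ref{L-Count} by using the elementary fact that ascents and descents are complementary among the positions of a permutation, and then to clean up the index via the standard symmetry of the Eulerian numbers.

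First I would observe that any $\pi \in \c{S}_n^+$ has exactly $n-1$ positions $i$ with $1 \le i \le n-1$, and each such position is either an ascent or a descent. Hence a permutation with exactly $k$ ascents has exactly $d := n-1-k$ descents, and conversely. In particular the set of permutations of $\c{S}_n^+$ with exactly $k$ ascents is precisely $\c{S}_{n,\,n-1-k}^+$, so its cardinality is $P(n, n-1-k)$ in the notation of Lemma~\ref{L-Count}. The hypotheses $n \ge 2$ and $1 \le k \le n-1$ guarantee $0 \le n-1-k \le n-1$, so Lemma~\ref{L-Count} applies with $d = n-1-k$ and yields
\[
P(n,\,n-1-k) = (n-1-k+1)\Eul{n-1}{n-1-k} = (n-k)\Eul{n-1}{n-1-k}.
\]

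It then remains only to rewrite $\Eul{n-1}{n-1-k}$ as $\Eul{n-1}{k-1}$. For this I would invoke the symmetry $\Eul{m}{j} = \Eul{m}{m-1-j}$, which follows from the bijection that reverses a permutation of size $m$: reversal interchanges ascents and descents, sending a permutation with $j$ ascents to one with $m-1-j$ ascents. Taking $m = n-1$ and $j = n-1-k$ gives $\Eul{n-1}{n-1-k} = \Eul{n-1}{(n-1)-1-(n-1-k)} = \Eul{n-1}{k-1}$, and substituting into the displayed equation completes the proof.

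The argument is essentially bookkeeping on top of the cited lemma, so I do not anticipate a genuine obstacle; the only point requiring care is applying the Eulerian symmetry with the correct indices and confirming that the constraints on $n$ and $k$ keep $d = n-1-k$ within the valid range $0 \le d \le n-1$ for Lemma~\ref{L-Count}.
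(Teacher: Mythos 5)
Your proposal is correct and follows essentially the same route as the paper's proof: translate $k$ ascents into $d=n-1-k$ descents, apply Lemma~\ref{L-Count} to get $(n-k)\Eul{n-1}{n-1-k}$, and finish with the symmetry $\Eul{m}{x}=\Eul{m}{m-1-x}$. Your only addition is spelling out the reversal bijection behind that symmetry (which the paper simply cites as well known) and verifying the index range, both of which are fine.
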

Note that there is no need to consider $k=0$ as every permutation of $\c{S}_n^+$ automatically has at least one ascent.

\begin{proof}
	This quantity is exactly $(n-k)\Eul{n-1}{n-1-k}$ by Lemma~\ref{L-Count} after replacing $d$ with $n-1-k$ (as any permutation of size $n$ with $k$ ascents has $n-1-k$ descents).  It is well known and easy to prove that $\Eul{m}{x}=\Eul{m}{m-1-x}$ for $m>0$ \cite{Con}, from which the result follows.
\end{proof}

\begin{proof}[Proof of Theorem~\ref{T-Main}]
	By Proposition~\ref{P-Bi} it is enough to prove that $P_n$ has this cardinality.  Given $\pi\in \c{S}_n^+$, the number of pairs $(\pi,A)\in P_n$ is exactly $2^{\asc(\pi)}$.  By Corollary~\ref{C-Count} we conclude that
	\[
		|P_n|=\sum_{k=1}^{n-1} (n-k)\Eul{n-1}{k-1}2^k,
	\]
	proving the result.
\end{proof}

\end{document}